\numberwithin{equation}{section}
\newtheorem{theorem}[subsection]{Theorem}
\newtheorem{lemma}[subsection]{Lemma}
\newtheorem{proposition}[subsection]{Proposition}
\theoremstyle{definition}
\newtheorem{definition}[subsection]{Definition}
\newtheorem{remark}[subsection]{Remark}
\newtheorem{example}[subsection]{Example}
\newtheorem{construction}[subsection]{Construction}
\newcommand{\bS}{\mathbb{S}}
\newcommand{\cC}{\mathcal{C}}
\newcommand{\cD}{\mathcal{D}}
\newcommand{\cI}{\mathcal{I}}
\newcommand{\cN}{\mathcal{N}}
\newcommand{\cS}{\mathcal{S}}
\DeclareMathOperator*{\hocolim}{hocolim}
\DeclareMathOperator{\Map}{Map}
\DeclareMathOperator{\THH}{THH}
\DeclareMathOperator{\colim}{colim}
\newcommand{\sm}{\wedge}
\newcommand{\iso}{\cong}
\newcommand{\bld}[1]{{\mathbf{#1}}}
\newcommand{\cy}{\mathrm{cy}}
\newcommand{\op}{{\mathrm{op}}}
\newcommand{\Spsym}[1]{{\mathrm{Sp}^{\Sigma}_{#1}}}
\DeclareMathOperator{\concat}{\sqcup}
\newcommand{\arxivlink}[1]{\href{http://arxiv.org/abs/#1}{\texttt{arXiv:#1}}}
  \newcommand{\Bemerkung}[1]{{\marginpar{\hspace{0.2\marginparwidth}\rule{0.6\marginparwidth}{0.75mm}\hspace{0.2\marginparwidth}}\noindent\bfseries[#1]}}
  \newcommand{\Bemerkung}[1]{}
\title[\texorpdfstring{$\THH$}{THH} and the cyclic bar construction]{Topological Hochschild homology and the cyclic bar construction in symmetric spectra}
\author{Irakli Patchkoria} \address{Department of Mathematical Sciences,
University of Copenhagen,\newline
Universitetsparken~5,
2100 Copenhagen \O,
Denmark}
\email{irakli.p@math.ku.dk}
\author{Steffen Sagave} \address{
Radboud University Nijmegen, IMAPP, PO Box 9010, 6500 GL Nijmegen,\newline The Netherlands} \email{s.sagave@math.ru.nl}
\thanks{The first author was supported by the Danish National Research Foundation through the Centre for Symmetry and Deformation (DNRF92).}
\date{\today}
\begin{document}
\begin{abstract}
  The cyclic bar construction in symmetric spectra and B\"okstedt's
  original construction are two possible ways to define the
  topological Hochschild homology of a symmetric ring spectrum. In
  this short note we explain how to correct an error in Shipley's
  original comparison of these two approaches.
\end{abstract}
\maketitle

\section{Introduction}
When topological Hochschild homology was first introduced by Marcel
B\"okstedt in the unpublished manuscript~\cite{Boekstedt_THH}, a good
point set level model for the smash product of spectra was not yet known,
and $\THH$ was defined for \emph{functors with smash products}. One
can implement B\"okstedt's definition for a symmetric ring spectrum
$R$ by defining $\THH(R)$ to be the realization of the
simplicial symmetric spectrum
\begin{equation}\label{eq:Bokstedt-def-intro}
[k] \mapsto \THH_k(R) = \hocolim_{(n_0,\dots,n_k)\in\cI^{\times k+1}} \Omega^{n_0+ \dots + n_k}LF_0(R_{n_0} \sm \dots \sm R_{n_k}).
\end{equation}
Here $\cI$ is the category of finite sets and injections, $L$ is a level-fibrant replacement functor in symmetric spectra, and $F_0$ is the suspension spectrum functor. The functoriality of $\Omega^{n_0+ \dots + n_k}LF_0(R_{n_0} \sm \dots \sm R_{n_k})$ in the product category $ \cI^{\times k+1}$ comes from the structure maps of $R$, and the  simplicial structure maps of $[k] \mapsto \THH_k(R)$ arise from the multiplication and unit of $R$; see Construction~\ref{constr:DofRM} below. 

When viewing a symmetric ring spectrum $R$ as a monoid  with respect to the
smash product of symmetric spectra, one can also define its topological Hochschild homology as the realization of the cyclic bar construction  $[k] \mapsto B^{\mathrm{cy}}_k(R) = R^{\sm k+1}$. 

These two approaches are compared by Shipley in~\cite[Theorem 4.2.8]{Shipley_THH}. The first step in her argument is to construct a chain of stable equivalences relating $B^{\mathrm{cy}}_{\bullet}(R)$ and the simplicial object
\begin{equation}\label{eq:D-of-cyclic-bar-intro}
[k] \mapsto \textstyle\hocolim_{n \in\cI} \Omega^{n}LF_0(R^{\sm k+1})_n\ .
\end{equation}
Next Shipley shows that there are canonical stable equivalences relating the simplicial degree $[k]$ parts of~\eqref{eq:Bokstedt-def-intro} and \eqref{eq:D-of-cyclic-bar-intro}. However, it is erroneously stated in~\cite[Theorem 4.2.8]{Shipley_THH} that these maps form a morphism of simplicial objects. The problem is the compatibility with the last face map: The permutation of the $\cI$-coordinates that goes into the last face map of the simplicial object~\eqref{eq:Bokstedt-def-intro} has no counterpart in the simplicial structure of~\eqref{eq:D-of-cyclic-bar-intro}. We make this precise in Example~\ref{ex:coherence-problem} below. 

In Theorem~\ref{thm:main-result} below we provide a comparison of these two  definitions of $\THH$ that avoids this problem. Our strategy is to use a cofibrant replacement that allows to replace homotopy colimits by colimits in the critical part of the argument. 

\subsection{Conventions}
We assume familiarity with symmetric spectra and refer to~\cite{HSS}
and~\cite{Schwede_SymSp} as useful references for this topic. We will
often index spheres and the levels of symmetric spectra by finite sets
$\bld{n}=\{1,\dots,n\}$ rather than by natural numbers. This helps to
keep track of permutation actions.
\subsection{Acknowledgments}
After we first discovered the error in \cite[Theorem 4.2.8]{Shipley_THH} and later found the present workaround, Brooke Shipley encouraged us to prepare this note and make it available. We thank Stefan Schwede for comments on an earlier version of this note. We also thank the referees for useful comments.  

\section{Two models for \texorpdfstring{$\THH$}{THH}}

Let $\cI$ denote the category of finite sets $\bld{m}=\{1,\dots,m\}$, $m \geq 0$, and injective maps. It is symmetric strict monoidal under the ordered concatenation of ordered sets $\bld{m}\concat \bld{n} = \bld{m+n}$. The empty set $\bld{0}$ is the monoidal unit, and the block permutation $\tau_{(\bld{m},\bld{n})} \colon \bld{m}\concat\bld{n} \to \bld{n}\concat\bld{m}$ provides the symmetry isomorphism for $\concat$. 

As explained in~\cite[2.2.2.1 and 4.2.1.1]{Dundas_GMc_local}, applying the cyclic bar construction in the category of small categories $(\mathrm{cat})$ to $\cI$ provides a functor
\[
B^{\mathrm{cy}}_{\bullet}\cI \colon \Delta^{\op} \to (\mathrm{cat}), \qquad [k] \mapsto \cI^{\times k+1}. 
\] 
The simplicial face and degeneracy maps act by
\[
\begin{split}
d_i (\bld{n_0},\dots,\bld{n_k}) &= \begin{cases}(\bld{n_0},\dots,\bld{n_i}\concat\bld{n_{i+1}},\dots, \bld{n_k}) & 0\leq i < k \\ 
(\bld{n_k}\concat\bld{n_0},\dots,\bld{n_{k-1}}) & i = k
\end{cases} \quad\text{ and}\\
s_i(\bld{n_0},\dots,\bld{n_k}) &= (\bld{n_0},\dots,\bld{n_i},\bld{0},\bld{n_{i+1}},\dots,\bld{n_k}).
\end{split}
\]
Recall from~\cite[1.1 Definition]{Thomason-hocolim} that the Grothendieck construction on a functor $F\colon \cC \to (\mathrm{cat})$  is the category whose objects are the pairs $(C;X)$ with $C\in\mathrm{Ob}(\cC)$ and $X \in \mathrm{Ob}(F(C))$. A morphism $(C;X) \to (D;Y)$ is a pair $(\alpha;f)$ of morphisms $\alpha \colon C \to D$ in $\cC$ and $f\colon F(\alpha)(X) \to Y$ in $F(D)$. 
\begin{definition}\label{def:BcyI}
Let $B^{\mathrm{cy}}\cI$ be the Grothendieck construction of $B^{\mathrm{cy}}_{\bullet}\cI \colon \Delta^{\op} \to (\mathrm{cat})$. 
\end{definition}

Let $\Spsym{}$ be the category of symmetric spectra of simplicial sets
and let $L = \mathrm{Sing}|-| \colon \Spsym{} \to \Spsym{}$ be the
level fibrant replacement functor given by forming the singular
complex of the geometric realization in each level. Let $\cS_*$ be the
category of pointed simplicial sets, and let $F_0\colon \cS_* \to
\Spsym{}$ be the suspension spectrum functor.

The next construction is a reformulation of \cite[4.2]{Shipley_THH}
and \cite[4.2.2.3]{Dundas_GMc_local}. 
\begin{construction} \label{constr:DofRM}
Let $R$ be an associative symmetric ring spectrum and let $M$ be an $R$-bimodule. Let
\[\cD(R;M) \colon B^{\mathrm{cy}}\cI \to \Spsym{}\]
be the functor which is defined on objects by 
\[([k];\bld{n_0},\dots,\bld{n_k}) \mapsto \Map(S^{\bld{n_0}\concat \dots \concat \bld{n_k}},LF_0(M_{\bld{n_0}}\sm R_{\bld{n_1}} \sm \dots \sm R_{\bld{n_k}})).
\]
The morphisms in $\cI^{k+1}$ act via the symmetric group actions on
the levels of $R$ and $M$ and the structure maps of the spectra $R$
and $M$ \cite[Definition 4.2.2.1]{Dundas_GMc_local}. The morphisms in $\Delta$
act as in \cite[4.2.2.3]{Dundas_GMc_local}. For example, the last face
map gives rise to a morphism
\begin{equation}\label{eq:dk-in-BcyI}
([k];\bld{n_0},\dots,\bld{n_k}) \to ([k-1];\bld{n_k}\concat\bld{n_0},\dots,\bld{n_{k-1}})
\end{equation}
in $B^{\mathrm{cy}}\cI$ which acts by using the symmetry isomorphism that
moves $R_{\bld{n_k}}$ to the front of the iterated smash product, the multiplication $R_{\bld{n_k}} \sm M_{\bld{n_0}} \to M_{\bld{n_k}\concat\bld{n_0}}$, and the corresponding permutation of the sphere coordinates. The universal property of the Grothendieck construction and~\cite[Lemma 4.2.2.2]{Dundas_GMc_local} imply that this does indeed define a functor on $B^{\mathrm{cy}}\cI$. (The benefit of indexing $\cD(R;M)$ by $B^{\mathrm{cy}}\cI$ will become apparent in the next section.)
\end{construction}
Writing $\cD_k(R;M) = \cD(R;M)([k];-)\colon \cI^{\times k+1} \to \Spsym{}$,  we get a simplicial object in symmetric spectra \[[k] \mapsto \textstyle\hocolim_{\cI^{\times k+1}}\cD_k(R;M) =: \THH_k(R;M)\] where $\alpha \colon [k] \to [l]$ in $\Delta$ acts by
\[
\textstyle\hocolim_{\cI^{\times l+1}}\cD_l(R;M) \to \hocolim_{\cI^{\times l+1}}\cD_k(R;M)\circ \alpha^* \to \hocolim_{\cI^{\times k+1}}\cD_k(R;M). 
 \]
 Here the first map is induced by the functoriality of $\cD(R;M)$ in
 $B^{\mathrm{cy}}\cI$.  As discussed in the introduction, 
 the realization of $\THH_{\bullet}(R;M)$ is  B\"okstedt's model for $\THH$.  

Now let
\[
B^{\mathrm{cy}}_{\bullet}(R;M) \colon \Delta^{\op}\to \Spsym, \qquad [k] \mapsto M \sm R^{\sm k}
\]
be the cyclic bar construction in $(\Spsym{},\sm,\bS)$; see e.g.~\cite[4.2.1.1]{Dundas_GMc_local}. Let 
\[
\Omega^{\cI}_{\mathrm{Sp}} \colon \Spsym{} \to (\Spsym{})^{\cI}, \qquad (\Omega^{\cI}_{\mathrm{Sp}})(X)(\bld{m}) = \Map(S^{\bld{m}},L F_0 X_{\bld{m}})
\]
be the functor where isomorphisms $\bld{m}\to\bld{m}$ in $\cI$ act by conjugation and inclusions $\bld{m-1}\to\bld{m}$ act via the structure map of $X$ . Then $\hocolim_{\cI}\Omega^{\cI}_{\mathrm{Sp}}\colon \Spsym{}\to\Spsym{}$ is Shipley's detection functor \cite[Definition 3.1.1]{Shipley_THH}. It is shown in the first part of the proof of \cite[Theorem 4.2.8]{Shipley_THH} that there is a zig-zag of degreewise stable equivalences of simplicial objects in symmetric spectra relating  $\hocolim_{\cI}\Omega^{\cI}_{\mathrm{Sp}}B^{\cy}_{\bullet}(R;M)$ and $B^{\cy}_{\bullet}(R;M)$. To relate the former object to $\THH_{\bullet}(R;M)$, we note that there is a canonical map 
\begin{equation}\label{eq:can-maps-iterated-smash}
M_{\bld{n_0}}\sm R_{\bld{n_1}}\sm \dots\sm R_{\bld{n_k}} \to (M\sm R \dots \sm R)_{\bld{n_0}\concat\dots\concat \bld{n_k}}.
\end{equation}
(The map arises for example from identifying $(X\sm Y)_n$ for
symmetric spectra $X$ and $Y$ with $\colim_{\alpha \colon
  \bld{n_1}\concat\bld{n_2} \to \bld{n}} X_{n_1} \sm Y_{n_2} \sm
S^{\bld{n}\setminus \alpha(\bld{n_1}\concat \bld{n_2})}$, where the
colimit is taken over the comma category $-\concat- \downarrow \bld{n}$.)
Writing $\mu_{k+1}\colon \cI^{\times k+1} \to \cI$ for the iterated concatenation, the map \eqref{eq:can-maps-iterated-smash} induces a morphism of symmetric spectra 
\begin{multline}\label{eq:problematic-comparison}
\THH_k(R;M) = \textstyle\hocolim_{\cI^{\times k+1}}\cD_k(R;M)\\ \to
\textstyle\hocolim_{\cI^{\times k+1}} \mu_{k+1}^* \Omega^{\cI}_{\mathrm{Sp}}(M\sm R^{\sm k}) \to 
\textstyle\hocolim_{\cI}  \Omega^{\cI}_{\mathrm{Sp}}(M\sm R^{\sm k}).
\end{multline}
The problem with the proof of \cite[Theorem 4.2.8]{Shipley_THH} is that this map fails to provide a map of simplicial objects: 
\begin{example}\label{ex:coherence-problem}
We examine how the comparison maps in simplicial levels $0$ and $1$ interact with $d_1$. 
To simplify the exposition, we here ignore the suspension spectrum functor and the level fibrant replacement. Let $f\colon S^{\bld{n_0}\concat\bld{n_1}} \to M_{\bld{n_0}} \sm R_{\bld{n_1}}$ represent a  $0$-simplex in $\hocolim_{\cI^{\times 2}}\cD_1(R;M)$. First applying the map~\eqref{eq:problematic-comparison} and then the simplicial structure map $d_1$ of $B^{\cy}_{\bullet}(R;M)$ to $f$ amounts to forming
the composite 
\begin{equation}\label{eq:deg-2-1-diagram-upper}
S^{\bld{n_0}\concat\bld{n_1}} \xrightarrow{f} M_{\bld{n_0}} \sm R_{\bld{n_1}} \to (M\sm R)_{\bld{n_0}\concat\bld{n_1}} \xrightarrow{\tau} (R\sm M)_{\bld{n_0}\concat\bld{n_1}} \xrightarrow{\mu} M_{\bld{n_0}\concat\bld{n_1}}.
\end{equation}
Applying first $d_1$ and then the map~\eqref{eq:problematic-comparison} sends $f$ to the composite
\begin{equation}\label{eq:deg-2-1-diagram-lower}
S^{\bld{n_1}\concat\bld{n_0}} \xrightarrow{\tau_{(\bld{n_1},\bld{n_0})}} S^{\bld{n_0}\concat\bld{n_1}}\xrightarrow{f}M_{\bld{n_0}} \sm R_{\bld{n_1}}  \xrightarrow{\tau} R_{\bld{n_1}} \sm M_{\bld{n_0}}  \xrightarrow{\mu } M_{\bld{n_1}\concat\bld{n_0}}.
\end{equation} 
However, inspecting the commutative diagram
\[\xymatrix@-1pc{
S^{\bld{n_0}\concat\bld{n_1}}   \ar[r]^-{f} \ar[d]^{\tau_{(\bld{n_0},\bld{n_1})}}  &  M_{\bld{n_0}} \sm R_{\bld{n_1}}  \ar[r] \ar[d]^{\tau}  & (M\sm R)_{\bld{n_0}\concat\bld{n_1}}  \ar[r]^-{\tau}  & (R\sm M)_{\bld{n_0}\concat\bld{n_1}}  \ar[d]^-{\tau_{(\bld{n_0},\bld{n_1})}} \ar[r]^-{\mu} & M_{\bld{n_0}\concat\bld{n_1}} \ar[d]^-{\tau_{(\bld{n_0},\bld{n_1})}} \\
 S^{\bld{n_1}\concat\bld{n_0}}  \ar[r]&   R_{\bld{n_1}} \sm M_{\bld{n_0}}   \ar[rr]&& (R \sm M)_{\bld{n_1}\concat\bld{n_0}} \ar[r]^-{\mu} & M_{\bld{n_1}\concat\bld{n_0}}, \\ 
}\]
we deduce that the two maps~\eqref{eq:deg-2-1-diagram-upper} and~\eqref{eq:deg-2-1-diagram-lower} differ by the conjugation action of the block permutation $\tau_{(\bld{n_0},\bld{n_1})} \colon \bld{n_0}\concat\bld{n_1}\to \bld{n_1}\concat\bld{n_0}$. In fact, this is already indicated by the order of $\bld{n_0}$ and $\bld{n_1}$. Hence the points in $\hocolim_{\cI}\Omega^{\cI}_{\mathrm{Sp}}(M\sm R^{\sm k})$ represented by the two maps~\eqref{eq:deg-2-1-diagram-upper} and~\eqref{eq:deg-2-1-diagram-lower} do not coincide in general. Instead, they are only connected by the $1$-simplex represented by the morphism $\tau_{(\bld{n_0},\bld{n_1})}$ in $\cI$. 

This shows that the maps~\eqref{eq:problematic-comparison} fail to be compatible with the simplicial structure maps and do not induce a morphism on the realization. 
\end{example}

\section{Diagrams indexed by the cyclic bar construction on \texorpdfstring{$\cI$}{I}}
We now return to the setup of Definition~\ref{def:BcyI}. Let us for a
moment view the iterated concatenation in $\cI$ as a
functor \[\mu_{k+1}\colon \cI^{\times k+1}\to \Delta^{\op}\times
\cI,\qquad (\bld{n_0},\dots,\bld{n_k})\mapsto
([k],\bld{n_0}\concat\dots\concat\bld{n_k}).\] We claim that each
$\alpha \colon [k]\to[l]$ in $\Delta$ induces a natural
transformation 
\[\overline{\alpha} \colon \mu_{l+1} \Rightarrow \mu_{k+1} \circ \alpha^*\]
such that for $\beta \colon [l] \to [m]$, the following composition rule is
satisfied:
\[\overline{\beta\alpha} = (\overline{\alpha}\beta^*)(\overline{\beta}): \mu_{m+1} \Rightarrow \mu_{k+1} \circ \alpha^*\circ \beta^* =\mu_{k+1} \circ (\beta \alpha)^*\] To define $\overline{\alpha}$, we set $\overline{\alpha} = (\alpha,\mathrm{id})$ if $\alpha$ is a degeneracy map or a face map that is not equal to the last face map, and $\overline{\alpha} = (\alpha,\tau_{(\bld{n_0}\concat\dots\concat\bld{n_{k-1}},\bld{n_k})})$ if $\alpha$ is the last face map. Writing a general $\alpha$ as a composite of face and degeneracy maps, we can define $\overline{\alpha}$ by the above composition formula. This is well defined since our definition of $\overline{\alpha}$ for the face and degeneracy maps is compatible with the simplicial identities.  By the universal property of the Grothendieck construction~\cite[1.3.1
Proposition]{Thomason-hocolim}, we thus get a functor 
\begin{equation}\label{eq:mu-tw}
\mu^{\mathrm{tw}} \colon B^{\mathrm{cy}}\cI \to \Delta^{\op}\times \cI
\end{equation}
sending $([k];\bld{n_0},\dots,\bld{n_k})$ to $([k],\bld{n_0}\concat\dots\concat\bld{n_k})$. 

\begin{definition}
Let $E\colon \Delta^{\op}\to (\Spsym{})^{\cI}$ be a simplicial object in $\cI$-diagrams of symmetric spectra. Viewing it as a functor $E\colon \Delta^{\op}\times \cI\to \Spsym{}$, we let \[E^{\mathrm{tw}} \colon B^{\mathrm{cy}}\cI\to \Spsym{}\] be the composite $E \circ \mu^{\mathrm{tw}}$ of $E$ with the functor~\eqref{eq:mu-tw}.  
\end{definition}

We note that for $E\colon \Delta^{\op}\to (\Spsym{})^{\cI}$,  there is a canonical map
\begin{equation}\label{eq:mukplus1-for-E}
\hocolim_{\cI^{\times k+1}}E^{\mathrm{tw}}([k];-) \xrightarrow{\iso}\\ \hocolim_{\cI^{\times k+1}} \mu_{k+1}^*E([k],-) \to \hocolim_{\cI}E([k],-). 
\end{equation}
Analogous to Example~\ref{ex:coherence-problem}, the maps~\eqref{eq:mukplus1-for-E} do in general fail to be compatible with the last face map $d_k$ and thus do not assemble to a map of simplicial objects. However,  composing with the map from the homotopy colimit to the colimit, this can be resolved:

\begin{lemma}\label{lem:comp-after-colim}
The morphisms~\eqref{eq:mukplus1-for-E} become compatible with the simplicial structure maps after composing them with the canonical map $\hocolim_{\cI}\to\colim_{\cI}$. 
\end{lemma}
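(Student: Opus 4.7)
The plan is to verify the commutativity of the naturality square for each face map $d^i$ and degeneracy map $s^i$ separately, since an arbitrary simplicial operator factors as a composite of such generators. The definition of $\mu^{\mathrm{tw}}$ makes it clear that the twist $\overline{\alpha}$ is the identity for every degeneracy and every face map other than the last, so in those cases the two composites
\[
\cI^{\times l+1} \xrightarrow{\alpha^*} \cI^{\times k+1} \xrightarrow{\mu_{k+1}} \cI \quad \text{and} \quad \cI^{\times l+1} \xrightarrow{\mu_{l+1}} \cI
\]
agree on the nose (using associativity of $\concat$ and the fact that $\bld{0}$ is the monoidal unit). This gives strict compatibility of~\eqref{eq:mukplus1-for-E} with those simplicial structure maps already at the level of $\hocolim_\cI$, without needing to pass to $\colim_\cI$.

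The only nontrivial case is the last face map $d^l\colon [l-1]\to[l]$. I would fix $(\bld{n_0},\dots,\bld{n_l}) \in \cI^{\times l+1}$ and chase an element $x \in E([l],\bld{n_0}\concat\dots\concat\bld{n_l})$ through the two sides of the square. Applying the simplicial structure map first and then~\eqref{eq:mukplus1-for-E} yields the class in $\hocolim_\cI E([l-1],-)$ represented by $E([l-1],\tau)\circ E(d^l)(x)$ at index $\bld{n_l}\concat\bld{n_0}\concat\dots\concat\bld{n_{l-1}}$, where $\tau = \tau_{(\bld{n_0}\concat\dots\concat\bld{n_{l-1}},\bld{n_l})}$ is the block permutation arising from $\overline{d^l}$. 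Applying~\eqref{eq:mukplus1-for-E} first and then the simplicial structure map induced on $\hocolim_\cI E([-],-)$ by $E(d^l)\colon E([l],-)\to E([l-1],-)$ yields instead the class of $E(d^l)(x)$ at index $\bld{n_0}\concat\dots\concat\bld{n_l}$. The two classes therefore differ precisely by the action of the morphism $\tau$ of $\cI$ on the diagram $E([l-1],-)$.

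After composition with the canonical map $\hocolim_\cI\to\colim_\cI$, this difference disappears: the cocone defining $\colim_\cI E([l-1],-)$ identifies, for every morphism $\phi\colon\bld{m}\to\bld{n}$ in $\cI$ and every $y\in E([l-1],\bld{m})$, the class of $y$ with the class of $E([l-1],\phi)(y)$. Applied to $y = E(d^l)(x)$ and $\phi = \tau$, this gives the equality of the two paths in $\colim_\cI E([l-1],-)$. The main obstacle is conceptual rather than computational: the twist genuinely matters at the level of homotopy colimits, as Example~\ref{ex:coherence-problem} illustrates; the point of the lemma is that the twist is a morphism in $\cI$ and therefore becomes invisible once one strictifies to the colimit.
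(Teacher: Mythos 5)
Your argument is correct and follows essentially the same route as the paper: reduce to the generating face and degeneracy maps, observe that only the last face map is problematic, and check by an element chase that the two composites differ exactly by pushing forward along the block permutation $\tau_{(\bld{n_0}\concat\dots\concat\bld{n_{l-1}},\bld{n_l})}$, which is invisible in $\colim_{\cI}$. The paper phrases the reduction slightly differently (it first passes to the square of colimits using naturality of $\hocolim\to\colim$ under change of index category and chases elements there), but the content is identical.
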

\begin{proof}
 Since the map from the homotopy colimit to the colimit is natural with respect to the change of the index category, it is sufficient to show that $\alpha \colon [k]\to[l]$ in $\Delta$  induces a commutative diagram 
\[\xymatrix@-1pc{
\colim_{\cI^{\times l+1}}E^{\mathrm{tw}}([l];-) \ar[r]\ar[d] &\colim_{\cI}E([l],-)\ar[d] \\
\colim_{\cI^{\times k+1}}E^{\mathrm{tw}}([k];-) \ar[r] &\colim_{\cI}E([k],-).
}\]
This is easy to verify for the degeneracy maps and all face maps but the last one. Let $\delta^{l}\colon [l-1]\to[l]$ be the last face map in $\Delta$, and let $x \in E^{\mathrm{tw}}([l];\bld{n_0},\dots,\bld{n_l})$ represent a simplex in one of the levels of the spectrum $\colim_{\cI^{\times l+1}}E^{\mathrm{tw}}([l];-)$. Then the composite through the upper right hand corner sends $x$ to the simplex represented by 
$(\delta^l)^*(x) \in E([l-1],\bld{n_0}\concat\dots\concat\bld{n_l}),$
while the other composite sends it to the simplex represented by
\[
(\tau_{(\bld{n_0}\concat\dots \concat\bld{n_{l-1}},\bld{n_l})})_*((\delta^l)^*(x)) \in E([l-1],\bld{n_l}\concat \bld{n_0}\concat\dots\concat\bld{n_{l-1}}).
\]
These represent the same simplex in the colimit. 
\end{proof}
We need some preparation to apply the lemma in a useful way.
\begin{definition}
Let $R$ be an associative symmetric ring spectrum and let $M$ be an $R$-bimodule. Then the \emph{twisted cyclic bar construction} is the $ B^{\mathrm{cy}}\cI$-diagram
\[
B^{\mathrm{cy}}(R;M)^{\mathrm{tw}} = \Omega^{\cI}_{\mathrm{Sp}}(B^{\mathrm{cy}}_{\bullet}(R;M))^{\mathrm{tw}} \colon B^{\mathrm{cy}}\cI\to \Spsym{}
\]
where $\Omega^{\cI}_{\mathrm{Sp}}$ and $B^{\mathrm{cy}}_{\bullet}(R;M)$ are as in the last section. 
\end{definition}

 Recall that a symmetric spectrum $X$ is \emph{semistable} if it admits a $\pi_*$-isomorphism to a symmetric $\Omega$-spectrum~\cite[5.6]{HSS}, and that it is \emph{flat} if it is $S$-cofibrant, i.e., cofibrant in the $S$-model structure developed in~\cite{Shipley_convenient}. We call a symmetric ring spectrum \emph{flat} if its underlying symmetric spectrum is flat. 
\begin{proposition}\label{prop:DRM-BcyRM}
  The canonical maps to the smash
  product~\eqref{eq:can-maps-iterated-smash} induce a natural
  transformation of $B^{\mathrm{cy}}\cI$-diagrams $\cD(R;M) \to
  B^{\mathrm{cy}}(R;M)^{\mathrm{tw}}$.   Fixing a simplicial degree $[k]$, the induced
  map 
\[
    \THH_k(R;M) = \textstyle\hocolim_{\cI^{\times k+1}}\cD_k(R;M) \to
    \hocolim_{\cI^{\times
        k+1}}B^{\mathrm{cy}}(R;M)^{\mathrm{tw}}([k];-)
\]
is a stable equivalence if $R$ is flat  and $R$ and $M$ are semistable. 
\end{proposition}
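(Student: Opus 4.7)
The plan is to prove the proposition in two stages: first establishing naturality of the canonical maps on $B^{\mathrm{cy}}\cI$, then verifying the stable equivalence at each fixed simplicial degree $[k]$.

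For naturality, I would fix $[k]$ and treat the $\cI^{\times k+1}$-naturality and the simplicial naturality separately. Within a fixed simplicial degree, the canonical map~\eqref{eq:can-maps-iterated-smash} is by construction natural with respect to the structure maps of $R$ and $M$ and with respect to symmetric group actions, so this step is bookkeeping. Among the simplicial maps, the degeneracies and the face maps $d_i$ with $i<k$ merely multiply adjacent factors without any nontrivial permutation, making naturality routine. The delicate case is the last face map $d_k$: Example~\ref{ex:coherence-problem} already isolated the discrepancy between the two sides as the block permutation $\tau_{(\bld{n_0}\concat\dots\concat\bld{n_{k-1}},\bld{n_k})}$. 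This is precisely the twist built into $\mu^{\mathrm{tw}}$, so the diagram of Example~\ref{ex:coherence-problem} generalized to arbitrary $k$ supplies the required compatibility.

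For the stable equivalence, my plan is to factor the map through Shipley's detection functor. Since $\mu^{\mathrm{tw}}$ restricted to the fiber over $[k]$ coincides with the iterated concatenation $\mu_{k+1}\colon \cI^{\times k+1}\to\cI$, the target at simplicial degree $[k]$ is $\hocolim_{\cI^{\times k+1}}\mu_{k+1}^*\Omega^{\cI}_{\mathrm{Sp}}(M\sm R^{\sm k})$. Postcomposing with the canonical map into $\hocolim_{\cI}\Omega^{\cI}_{\mathrm{Sp}}(M\sm R^{\sm k})$ yields a stable equivalence by homotopy cofinality of $\mu_{k+1}$, a standard input in the B\"okstedt--Shipley framework. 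It thus suffices to show that the composite
\[
\textstyle\hocolim_{\cI^{\times k+1}}\cD_k(R;M) \longto \hocolim_{\cI}\Omega^{\cI}_{\mathrm{Sp}}(M\sm R^{\sm k})
\]
is a stable equivalence. The right hand side is Shipley's detection functor applied to $M\sm R^{\sm k}$, and the composite is precisely the level-$k$ comparison map employed in the proof of \cite[Theorem 4.2.8]{Shipley_THH}---the part of her argument that is correct.

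The main obstacle is checking that our hypotheses suffice to identify both sides with the derived smash product. On the right, this requires $M\sm R^{\sm k}$ to be semistable so that the detection functor recovers its homotopy type; on the left, B\"okstedt's construction must model the derived smash product, which uses flatness of $R$ to ensure that the point-set iterated smash product has the correct homotopy type. Both facts rest on the observation that smashing with a flat symmetric spectrum preserves $\pi_*$-isomorphisms and hence preserves semistability. With these inputs, Shipley's analysis in \cite[Proof of Theorem 4.2.8]{Shipley_THH} applies at each fixed $[k]$ without modification.
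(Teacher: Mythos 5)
Your overall strategy coincides with the paper's: factor the comparison map through $\hocolim_{\cI}\Omega^{\cI}_{\mathrm{Sp}}(M\sm R^{\sm k})$, quote Shipley's (correct) argument that the composite is a stable equivalence, and conclude by two-out-of-three. Your discussion of naturality, with the twist in $\mu^{\mathrm{tw}}$ absorbing the block permutation isolated in Example~\ref{ex:coherence-problem}, is also in line with the intended argument.

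However, there is a genuine gap in your justification of the second leg of the factorization. You assert that
\[
\textstyle\hocolim_{\cI^{\times k+1}}\mu_{k+1}^*\,\Omega^{\cI}_{\mathrm{Sp}}(M\sm R^{\sm k}) \to \textstyle\hocolim_{\cI}\Omega^{\cI}_{\mathrm{Sp}}(M\sm R^{\sm k})
\]
is a stable equivalence ``by homotopy cofinality of $\mu_{k+1}$,'' calling this a standard input. The concatenation functor $\mu_{k+1}\colon\cI^{\times k+1}\to\cI$ is \emph{not} homotopy cofinal: already the undercategory $\bld{1}\downarrow\mu_2$ has two path components, since the element $1$ may land in either factor of the concatenation; see the remark following Lemma~\ref{lem:mul-hty-cofinal}. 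This is precisely the point where the hypotheses of the proposition must enter a second time: one needs Lemma~\ref{lem:mul-hty-cofinal}, which establishes the desired weak equivalence only for \emph{semistable} $\cI$-spaces, together with Lemma~\ref{lem:OmegaISp-semistable}, which shows that $\Omega^{\cI}_{\mathrm{Sp}}(E)$ is levelwise a semistable $\cI$-space when the symmetric spectrum $E$ is semistable, and finally Schwede's theorem that flatness of $R$ and semistability of $R$ and $M$ force $B^{\cy}_{k}(R;M)=M\sm R^{\sm k}$ to be semistable. Your proposal invokes semistability of $M\sm R^{\sm k}$ only to ensure that the detection functor recovers the correct homotopy type (i.e., for the composite map), but misses that it is also indispensable as a substitute for the failed cofinality. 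Without this, the reduction to Shipley's composite does not go through.
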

\begin{proof} It follows from the definitions that there is an induced map. 
The argument given in the proof of \cite[Theorem
  4.2.8]{Shipley_THH}, which is in turn based on \cite[Proposition
  4.2.3]{Shipley_THH}, shows that the composite of 
the map in the statement of the proposition 
with the map~\eqref{eq:mukplus1-for-E} for $E = \Omega^{\cI}_{\mathrm{Sp}}(B^{\mathrm{cy}}_{\bullet}(R;M))$
is a stable equivalence. Hence it is enough to show that \[\textstyle\hocolim_{\cI^{\times k+1}}\mu_{k+1}^*(\Omega^{\cI}_{\mathrm{Sp}}(B^{\mathrm{cy}}_{k}(R;M) ) \to \hocolim_{\cI}\Omega^{\cI}_{\mathrm{Sp}}(B^{\mathrm{cy}}_{k}(R;M))\]
is a stable equivalence. This follows from Lemma~\ref{lem:mul-hty-cofinal} and Lemma~\ref{lem:OmegaISp-semistable} below since by~\cite[4.10~Theorem]{Schwede_homotopy-groups},  our assumptions on $R$ and $M$ imply that $B^{\mathrm{cy}}_{k}(R;M)$ is semistable. 
\end{proof}

To apply Lemma~\ref{lem:comp-after-colim} to the cyclic bar construction, we employ the projective model structure on the diagram category $(\Spsym{})^{\Delta^{\op}\times \cI} = ((\Spsym{})^{\Delta^{\op}})^{\cI}$. This is the model structure where a natural transformation $f\colon X \to Y$ of $\Delta^{\op} \times \cI$-diagrams of symmetric spectra is a weak equivalence or fibration if $f([k],\bld{m})$ is a weak equivalence or fibration in the absolute projective stable model structure on $\Spsym{}$ for all objects $([k],\bld{m})$ of $\Delta^{\op}\times\cI$. Let
\begin{equation}\label{eq:cof-rep}
\xymatrix{C \ar@{->>}[r]^-{\sim} & \Omega^{\cI}_{\mathrm{Sp}}(B^{\mathrm{cy}}_{\bullet}(R;M))}
\end{equation}
be a cofibrant resolution in this model structure. Inspecting the generating cofibrations of the projective model structure on $(\Spsym{})^{\Delta^{\op}\times \cI}$, it follows that for each $[k]$, the map $\xymatrix{C([k],-) \ar@{->>}[r]^-{\sim} & \Omega^{\cI}_{\mathrm{Sp}}(B^{\mathrm{cy}}_{k}(R;M))}$
is a cofibrant replacement in $(\Spsym{})^{\cI}$. 
\begin{proposition}\label{prop:argument-replacement}
The cofibrant replacement and the natural map from the homotopy colimit to the colimit induce a zig-zag of stable equivalences
\begin{multline*}
\textstyle\hocolim_{\cI^{\times k+1}}B^{\mathrm{cy}}(R;M)^{\mathrm{tw}}([k];-) \xleftarrow{\sim}\hocolim_{\cI^{\times k+1}}C^{\mathrm{tw}}([k];-)\\ \xrightarrow{\sim} \colim_{\cI}C([k],-) \xleftarrow{\sim}\textstyle\hocolim_{\cI}C([k],-)  \xrightarrow{\sim} \textstyle\hocolim_{\cI}\Omega^{\cI}_{\mathrm{Sp}}(B^{\mathrm{cy}}_{k}(R;M))
\end{multline*}
that is compatible with the simplicial structure maps. 
\end{proposition}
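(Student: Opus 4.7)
The plan is to verify two things: that each map in the zig-zag is a stable equivalence, and that the whole diagram is natural with respect to $[k] \in \Delta^{\op}$.

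The first and fourth maps arise from the cofibrant resolution~\eqref{eq:cof-rep}, which by definition is objectwise a weak equivalence in $(\Spsym{})^{\Delta^{\op}\times\cI}$. Fixing a simplicial degree $[k]$ specializes this to an objectwise weak equivalence of $\cI$-diagrams $C([k],-) \to \Omega^{\cI}_{\mathrm{Sp}}(B^{\mathrm{cy}}_k(R;M))$; pulling back along the functor $\mu^{\mathrm{tw}}$ of~\eqref{eq:mu-tw} likewise produces an objectwise weak equivalence of $B^{\mathrm{cy}}\cI$-restricted diagrams. Since $\hocolim$ preserves objectwise stable equivalences, both maps are stable equivalences. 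The third map is a stable equivalence because, as noted following~\eqref{eq:cof-rep}, $C([k],-)$ is projectively cofibrant in $(\Spsym{})^{\cI}$, and $\hocolim \to \colim$ is a weak equivalence for such diagrams. For the second map, I would write it as the composite $\hocolim_{\cI^{\times k+1}}\mu_{k+1}^* C([k],-) \to \hocolim_{\cI}C([k],-) \to \colim_{\cI}C([k],-)$, where the second factor is the third map and the first factor is a stable equivalence by the homotopy cofinality of $\mu_{k+1}$ provided by Lemma~\ref{lem:mul-hty-cofinal}.

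For simplicial naturality, the first, third, and fourth arrows are visibly natural in $[k]$: the cofibrant replacement is a morphism of $\Delta^{\op}\times\cI$-diagrams, and the canonical transformation $\hocolim \to \colim$ is natural with respect to morphisms of diagrams. The crucial arrow is the second one. As Example~\ref{ex:coherence-problem} makes precise, the analogous intermediate map landing in $\hocolim_{\cI}$ fails to commute with the last face map because of the block permutation built into the cyclic structure. Lemma~\ref{lem:comp-after-colim} is precisely designed to address this: once one post-composes with the canonical projection to $\colim_{\cI}$, the two composites involving $d_k$ agree on the nose, so the second arrow becomes compatible with all simplicial structure maps.

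The principal obstacle is the second arrow, both because one must show it is a stable equivalence and because one must verify its simplicial naturality in spite of the twist phenomenon of Example~\ref{ex:coherence-problem}. What makes the argument go through is the combination of the projective cofibrancy of $C$, which forces $\hocolim$ to agree with $\colim$, together with Lemma~\ref{lem:comp-after-colim}, which ensures that passing all the way to the ordinary colimit absorbs the problematic block permutation.
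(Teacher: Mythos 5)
Your treatment of the first, third, and fourth maps and of the simplicial compatibility via Lemma~\ref{lem:comp-after-colim} matches the paper's proof, and your decomposition of the second map through $\hocolim_{\cI}C([k],-)$ is also the paper's reduction. However, there is a genuine gap in your justification of the remaining piece, the map $\hocolim_{\cI^{\times k+1}}\mu_{k+1}^*C([k],-) \to \hocolim_{\cI}C([k],-)$. You assert this is a stable equivalence ``by the homotopy cofinality of $\mu_{k+1}$ provided by Lemma~\ref{lem:mul-hty-cofinal}.'' But $\mu_{k+1}$ is \emph{not} homotopy cofinal --- the remark following that lemma points out that the classifying space of $\bld{1}\downarrow \mu_2$ has two path components, so the conclusion of the lemma fails without a hypothesis on the diagram. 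Lemma~\ref{lem:mul-hty-cofinal} is a conditional statement: it applies only to \emph{semistable} $\cI$-spaces. So to invoke it you must verify that $C([k],-)$ is semistable as an $\cI$-space in every spectrum degree, and your proposal never addresses this.

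That verification is the actual content of this step, and it is where the standing hypotheses on $R$ and $M$ enter. The paper's argument is: the cofibrant replacement~\eqref{eq:cof-rep} is an objectwise level acyclic fibration, so in each spectrum degree $C([k],-)$ is levelwise equivalent to $\Omega^{\cI}_{\mathrm{Sp}}(B^{\mathrm{cy}}_{k}(R;M))$; the latter is a semistable $\cI$-space in every spectrum degree by Lemma~\ref{lem:OmegaISp-semistable}, because $B^{\mathrm{cy}}_{k}(R;M)$ is a semistable symmetric spectrum --- which in turn uses that $R$ is flat and $R$ and $M$ are semistable, via \cite[4.10~Theorem]{Schwede_homotopy-groups}. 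Without this chain of reductions your argument would prove the proposition with no hypotheses on $R$ and $M$ at all, which cannot be right given that $\mu_{k+1}$ fails to be cofinal in general. You should add this semistability verification to complete the proof.
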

We prove the proposition at the end of the section. 
\begin{theorem}\label{thm:main-result}
  Let $R$ be a flat symmetric ring spectrum, let $M$ be an
   $R$-bimodule spectrum, and assume that $R$ and $M$
  are semistable. Then there is a zig-zag of degreewise stable
  equivalences of simplicial objects relating
  $B^{\mathrm{cy}}_{\bullet}(R;M)$ and $\THH_{\bullet}(R;M)$. It
  induces a chain of stable equivalences after realization.
\end{theorem}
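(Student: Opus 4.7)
The plan is to build the required zig-zag by concatenating three separate pieces, each already engineered to respect the simplicial structure maps.

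The first piece is the morphism
\[
\THH_{\bullet}(R;M) = \hocolim_{\cI^{\times \bullet+1}} \cD_{\bullet}(R;M) \longrightarrow \hocolim_{\cI^{\times \bullet+1}} B^{\mathrm{cy}}(R;M)^{\mathrm{tw}}([\bullet];-)
\]
coming from Proposition~\ref{prop:DRM-BcyRM}. This is genuinely a map of simplicial symmetric spectra, because it is induced by a natural transformation of $B^{\mathrm{cy}}\cI$-diagrams and the simplicial structure on each side is obtained by restricting to each fibre $\cI^{\times k+1}$ of $B^{\mathrm{cy}}\cI\to \Delta^{\op}$ and taking the corresponding homotopy colimit; the flatness and semistability hypotheses on $R$ and $M$ then ensure that it is a stable equivalence in each simplicial degree. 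The second piece is the zig-zag delivered by Proposition~\ref{prop:argument-replacement}, which runs from the target of this map down to $\hocolim_{\cI}\Omega^{\cI}_{\mathrm{Sp}}(B^{\mathrm{cy}}_{\bullet}(R;M))$ and is already asserted there to be a degreewise stable equivalence that is compatible with the simplicial structure. The third piece is the zig-zag of degreewise stable equivalences of simplicial symmetric spectra connecting $\hocolim_{\cI}\Omega^{\cI}_{\mathrm{Sp}}(B^{\mathrm{cy}}_{\bullet}(R;M))$ to $B^{\mathrm{cy}}_{\bullet}(R;M)$, which is precisely the uncontroversial first half of the proof of~\cite[Theorem 4.2.8]{Shipley_THH} (it involves only Shipley's detection functor and the map from the spectrum to its $\Omega^{\cI}_{\mathrm{Sp}}$, and so sidesteps the problematic comparison~\eqref{eq:problematic-comparison}).

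Splicing these three zig-zags together produces the desired chain of degreewise stable equivalences between $\THH_{\bullet}(R;M)$ and $B^{\mathrm{cy}}_{\bullet}(R;M)$. For the realization statement, I would appeal to the standard fact that geometric realization of simplicial symmetric spectra of simplicial sets takes degreewise stable equivalences of levelwise well-pointed (or Reedy cofibrant) simplicial objects to stable equivalences. Since everything in sight is built out of smash products of symmetric spectra of simplicial sets, the functor $\Omega^{\cI}_{\mathrm{Sp}} = \Map(S^{-},LF_0(-)_{-})$, homotopy colimits over small categories, and a projectively cofibrant replacement $C$, the relevant well-pointedness is routine to check, and realization preserves stable equivalences in each piece of the zig-zag.

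The main obstacle is not a single hard computation but rather the simplicial compatibility at the last face map $d_k$, which is exactly where Shipley's original argument broke down in Example~\ref{ex:coherence-problem}. All of the structural work in Section~3 — the passage from $\Delta^{\op}\times\cI^{\times k+1}$ to the Grothendieck construction $B^{\mathrm{cy}}\cI$ via the twisted functor $\mu^{\mathrm{tw}}$, the introduction of $B^{\mathrm{cy}}(R;M)^{\mathrm{tw}}$, the cofibrant-replace-then-pass-to-$\colim_{\cI}$ trick of Lemma~\ref{lem:comp-after-colim}, and Proposition~\ref{prop:argument-replacement} — has been arranged so that this last-face-map compatibility holds on the nose rather than only up to a conjugation by a block permutation. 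Once those results are available the theorem follows essentially by concatenation.
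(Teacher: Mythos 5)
Your proposal is correct and is essentially identical to the paper's own proof: it combines Proposition~\ref{prop:DRM-BcyRM}, Proposition~\ref{prop:argument-replacement}, and the unproblematic first part of the proof of \cite[Theorem 4.2.8]{Shipley_THH} in exactly the same way, with the realization statement handled by the standard argument. No gaps.
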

\begin{proof}
This follows by combining Propositions~\ref{prop:DRM-BcyRM} and~\ref{prop:argument-replacement} with the chain of degreewise stable equivalences relating $B^{\mathrm{cy}}_{\bullet}(R;M) $ and $\hocolim_{\cI}\Omega^{\cI}_{\mathrm{Sp}}(B^{\mathrm{cy}}_{\bullet}(R;M))$ from the proof of \cite[Theorem 4.2.8]{Shipley_THH}.
\end{proof}

\begin{remark}
One can use the argument outlined in \cite[Remark 4.2.10]{Shipley_THH} to get to a more general statement that avoids  the semistability assumption in the previous theorem. 
\end{remark}

\begin{remark}
When $M=R$, both $B^{\mathrm{cy}}_{\bullet}(R;M) $ and $\THH_{\bullet}(R;M)$ are cyclic objects, i.e., they extend to functor $\Lambda^{\op} \to \Spsym{}$ on Connes' cyclic category~$\Lambda$. Replacing $\Delta$  in our constructions by $\Lambda$ leads to a chain of stable equivalences relating these cyclic objects and therefore to a cyclic version of Theorem~\ref{thm:main-result}. After realization of the cyclic objects involved, we thus obtain a chain of stable equivalences of symmetric spectra with $S^1$-action relating $B^{\mathrm{cy}}(R)=|B^{\mathrm{cy}}_{\bullet}(R;R)|$ and $\THH(R) = |\THH_{\bullet}(R;R)|$. 

In view of the cyclotomic structure on the cyclic bar construction (of an orthogonal ring spectrum) recently established by Angeltveit et. al. \cite{Angeltveit-et-al_relative-cyclotomic}, one may ask if this zig-zag of stable equvialences induces a zig-zag of stable equivalences relating the resulting topological cyclic homology spectra. We don't have evidence that this follows directly from the present result. In fact, already the zig-zag of stable equivalences $B^{\mathrm{cy}}(R) \simeq \hocolim_{\cI}\Omega^{\cI}_{\mathrm{Sp}}(B^{\mathrm{cy}}(R))$  from the proof of \cite[Theorem 4.2.8]{Shipley_THH} does not appear to be well behaved with the passage to fixed points. 
\end{remark}
\subsection{Semistability results}
An  \emph{$\cI$-space} $X$ is a functor $X\colon \cI \to \cS_*$ from $\cI$ to based simplicial sets. Let $\cN \subset \cI$ be the subcategory given by the standard inclusions. A map of $\cI$-spaces $X \to Y$ is an \emph{$\cN$-equivalence} if the induced map of based homotopy colimits $\hocolim_{\cN}X \to \hocolim_{\cN}Y$ is a weak equivalence of spaces.  An $\cI$-space $X$ is \emph{semistable} if there is an $\cN$-equivalence $X \to Y$ with $Y$ homotopy constant, i.e.,  every $\alpha \colon \bld{m}\to\bld{n}$ induces a weak equivalence $\alpha_*\colon Y(\bld{m}) \to Y(\bld{n})$. This notion of semistability is studied in~\cite[2.5]{Sagave-S_group-compl} in the case of unbased $\cI$-spaces. 

\begin{lemma}\label{lem:mul-hty-cofinal}
Let $X\colon \cI \to \cS_*$ be a semistable $\cI$-space. Then the canonical map
\[
\textstyle\hocolim_{\cI^{\times k}}\mu_{k}^*(X) \to \hocolim_{\cI}X 
\]
is a weak equivalence. 
\end{lemma}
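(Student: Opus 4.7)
The plan is to reduce the lemma to the case where $X$ is homotopy constant, using the $\cN$-equivalence $X \to Y$ provided by the semistability hypothesis. The homotopy constant case is easy: the canonical map $\const_{Y(\bld{0})} \to Y$ is a levelwise weak equivalence of $\cI$-diagrams, and since $\bld{0}$ is initial in $\cI$ the classifying space $B\cI$ is contractible, so $\hocolim_{\cI} Y \simeq Y(\bld{0})$. The pullback $\mu_k^{*}(Y)$ is levelwise weakly equivalent to the constant $\cI^{\times k}$-diagram at $Y(\bld{0})$ because $Y(\bld{n_1}\concat\dots\concat\bld{n_k}) \simeq Y(\bld{0})$ for every tuple, and the same reasoning gives $\hocolim_{\cI^{\times k}} \mu_k^{*}(Y) \simeq Y(\bld{0})$. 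The canonical map of the lemma respects these identifications, so it is a weak equivalence in this case.

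For the general case I would work inside the commutative square
\[
\xymatrix@-1pc{
\hocolim_{\cI^{\times k}}\mu_k^{*}(X) \ar[r]\ar[d] & \hocolim_{\cI} X \ar[d] \\
\hocolim_{\cI^{\times k}}\mu_k^{*}(Y) \ar[r]^-{\sim} & \hocolim_{\cI} Y,
}
\]
whose bottom arrow is a weak equivalence by the previous paragraph. The right vertical arrow is a weak equivalence by the Sagave--Schlichtkrull characterization of semistability: the restriction $\hocolim_{\cN} Z \to \hocolim_{\cI} Z$ is a weak equivalence for every semistable $\cI$-space $Z$, so the right vertical reduces to the map $\hocolim_{\cN} X \to \hocolim_{\cN} Y$, which is a weak equivalence by the very definition of $\cN$-equivalence. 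For the left vertical I would apply the same argument with $\cN^{\times k}\subset\cI^{\times k}$, provided $\mu_k^{*}(X)$ is semistable as an $\cI^{\times k}$-diagram. This in turn follows because the abstract functor of posets $\cN^{\times k}\to \cN$, $(n_1,\dots,n_k)\mapsto n_1+\dots+n_k$, is homotopy cofinal (the over-category over $\bld{N}$ is the poset of tuples with sum at most $N$, which has initial element $(\bld{0},\dots,\bld{0})$ and is hence contractible), so the $\cN$-equivalence $X\to Y$ propagates to an $\cN^{\times k}$-equivalence $\mu_k^{*}(X)\to \mu_k^{*}(Y)$, whose target is homotopy constant.

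The main obstacle I expect is carefully setting up the $\cI^{\times k}$-analogue of the Sagave--Schlichtkrull semistability comparison, and distinguishing between the restriction of $\mu_k \colon \cI^{\times k} \to \cI$ to $\cN^{\times k}$ (which does \emph{not} land in $\cN$, since a concatenation of standard inclusions is not itself a standard inclusion) and the abstract poset-level addition functor. These two functors differ only by block permutations of the image, and the discrepancy is absorbed at the level of homotopy colimits of semistable diagrams by the natural transformations connecting them. Once this is sorted out, the three weak equivalences above force the top horizontal arrow in the square to be a weak equivalence, which is the content of the lemma.
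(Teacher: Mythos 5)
Your reduction to the homotopy constant case via the square induced by the $\cN$-equivalence $X\to Y$ is the paper's strategy, and your treatment of the bottom row and of the right-hand vertical is sound (the paper invokes Shipley's Proposition~2.2.9 where you invoke the Sagave--Schlichtkrull characterization; these do the same non-formal work). The gap is in the left-hand vertical $\hocolim_{\cI^{\times k}}\mu_k^*(X)\to\hocolim_{\cI^{\times k}}\mu_k^*(Y)$, which is the only step where the product indexing category actually enters and hence the real content of the lemma. Your plan needs (a) an $\cI^{\times k}$-analogue of the statement that an $\cN^{\times k}$-equivalence onto a homotopy constant diagram induces a weak equivalence on $\hocolim_{\cI^{\times k}}$ --- this is not in the cited literature and is not formal, since already in one variable an $\cN$-equivalence need not induce a weak equivalence on $\hocolim_{\cI}$ --- and (b) an identification of $\hocolim_{\cN^{\times k}}\mu_k^*(X)$ with $\hocolim_{\cN}X$. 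For (b), your proposed fix, namely that $\mu_k|_{\cN^{\times k}}$ and the composite $\cN^{\times k}\xrightarrow{+}\cN\hookrightarrow\cI$ ``differ only by block permutations \dots\ absorbed by the natural transformations connecting them,'' fails as stated: already for $k=2$ there is \emph{no} natural transformation between these two functors in either direction. Both send $(\bld{1},\bld{0})\to(\bld{1},\bld{1})$ to $1\mapsto 1$, forcing $\eta_{(\bld{1},\bld{1})}(1)=1$, whereas $\mu_2$ sends $(\bld{0},\bld{1})\to(\bld{1},\bld{1})$ to $1\mapsto 2$ while the addition functor sends it to $1\mapsto 1$, forcing $\eta_{(\bld{1},\bld{1})}(1)=2$. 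This block-permutation incoherence is exactly the phenomenon the paper is written to correct, so it cannot be dismissed in passing. A smaller slip: for homotopy cofinality of $+\colon\cN^{\times k}\to\cN$ one must check contractibility of the under-categories $\bld{N}\downarrow(+)$, not the over-categories; they are contractible (being directed posets), but not for the reason you give.

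The paper sidesteps both (a) and (b) with a Fubini argument: it rewrites $\hocolim_{\cI^{\times k}}\mu_k^*(X)$ as the iterated homotopy colimit $\hocolim_{\cI^{\times k-1}}\hocolim_{\bld{n_k}\in\cI}X((\bld{n_1}\concat\dots\concat\bld{n_{k-1}})\concat\bld{n_k})$, notes that restriction along $(\bld{n_1}\concat\dots\concat\bld{n_{k-1}})\concat-\colon\cI\to\cI$ preserves $\cN$-equivalences by a cofinality argument (in the \emph{last} slot the structure maps really are standard inclusions up to the cofinal inclusion $\{n\geq m\}\subset\cN$, so no permutation problem arises), applies the one-variable result to the inner homotopy colimit, and then uses that the outer homotopy colimit preserves objectwise weak equivalences. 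To complete your route you would either have to develop the multivariable semistability comparison from scratch or reduce it to the one-variable case --- and the natural way to do the latter is precisely this one-coordinate-at-a-time decomposition.
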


\begin{proof}
Suppose first that $X$ is homotopy constant. Then the canonical maps 
\[
X(\bld{n_1}\concat\dots \concat \bld{n_k}) \to \textstyle\hocolim_{\cI^{\times k}}\mu_{k}^*(X) \quad \text{and} \quad X(\bld{n_1}\concat\dots \concat \bld{n_k}) \to \hocolim_{\cI}X 
\]
are weak equivalences since the classifying spaces of $\cI$ and $\cI^{\times k}$ are contractible, see e.g.~\cite[Proposition 5.4]{Dugger_replacing}. This implies the result for a homotopy constant~$X$. For a semistable $X$, it is now sufficient to show that an $\cN$-equivalence $X \to Y$ induces weak equivalences
\[\textstyle\hocolim_{\cI}X \to \hocolim_{\cI}Y\quad\text{and}\quad\hocolim_{\cI^{\times k}}\mu_{k}^*(X) \to \hocolim_{\cI^{\times k}}\mu_{k}^*(Y).
\]
For the first map this follows from \cite[Proposition 2.2.9]{Shipley_THH}. The claim about the second map follows since there is a weak equivalence
\[
\hocolim_{\cI^{\times k}}\mu_k^*(X) \xrightarrow{\sim} \hocolim_{(\bld{n_1},\dots,\bld{n_{k-1}})\in\cI^{\times k-1}}\hocolim_{\bld{n_k}\in\cI} X((\bld{n_1}\concat \dots\concat \bld{n_{k-1}}) \concat\bld{n_k}))
\]
and restriction along $(\bld{n_1}\concat\dots\concat\bld{n_{k-1}})\concat - \colon \cI \to \cI$ preserves $\cN$-equivalences by a cofinality argument. 
\end{proof}
\begin{remark}
Since the classifying space of the undercategory $\bld{1}\downarrow \mu_2$ has two path components, the functor $\mu_{k}$ is in general not homotopy cofinal, and the last lemma does not hold without the semistability
hypothesis. 
\end{remark}
\begin{lemma}\label{lem:OmegaISp-semistable}
Let $E$ be a semistable symmetric spectrum. Then $\Omega^{\cI}_{\mathrm{Sp}}(E)$ is a semistable $\cI$-space in every spectrum degree. 
\end{lemma}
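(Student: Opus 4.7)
The plan is to identify each spectrum-degree-$p$ part of $\Omega^{\cI}_{\mathrm{Sp}}(E)$ with $\Omega^{\cI}$ of a suitable semistable symmetric spectrum, and then to prove that $\Omega^{\cI}(F)$ is a semistable $\cI$-space whenever $F$ is a semistable symmetric spectrum.

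First, unraveling the definition in spectrum degree $p$ gives the $\cI$-space $\bld{m}\mapsto\Omega^{\bld{m}}\Sing|E_{\bld{m}}\wedge S^p|$, with standard-inclusion structure maps built from those of $E$ (composed with the twist swapping $S^p$ past $S^{\bld{1}}$). I would identify this $\cI$-space with $\Omega^{\cI}(E^{(p)})$, where $E^{(p)}$ denotes the level-fibrant symmetric spectrum with $\bld{m}$-level $\Sing|E_{\bld{m}}\wedge S^p|$; equivalently, $E^{(p)}$ is the level-fibrant replacement of the smash product $E\wedge F_0 S^p$. Since $F_0 S^p$ is flat, the results in~\cite{Schwede_homotopy-groups} on preservation of semistability under smash products then give that $E^{(p)}$ is semistable whenever $E$ is.

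For the second step, I would pick a $\pi_*$-isomorphism $F\to F^{\Omega}$ with $F^{\Omega}$ a level-fibrant $\Omega$-spectrum (which exists by semistability of $F$) and argue that the induced map $\Omega^{\cI}(F)\to\Omega^{\cI}(F^{\Omega})$ is an $\cN$-equivalence with homotopy-constant target. For the $\cN$-equivalence, since $\cN$ is isomorphic to the filtered poset $\bN$,
\[
\pi_q(\hocolim\nolimits_{\cN}\Omega^{\cI}(F)) = \colim_{\bld{m}\in\cN}\pi_{q+\bld{m}}(F_{\bld{m}})
\]
computes the naive stable homotopy groups of $F$, which agree with the true $\pi_q(F)$ by semistability, and similarly for $F^{\Omega}$, so the $\pi_*$-iso becomes a weak equivalence on $\cN$-hocolims. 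For the homotopy constancy of $\Omega^{\cI}(F^{\Omega})$, standard inclusions act by the $\Omega$-spectrum structure maps (hence by weak equivalences), isomorphisms in $\cI$ act by conjugation with permutations (hence by homeomorphisms), and every $\cI$-morphism factors as a standard inclusion followed by a permutation.

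The main obstacle is the identification of $E^{(p)}$ as a semistable symmetric spectrum: this requires a careful check that the $\Sigma_{\bld{m}}$-action and structure maps inherited from $E$ match those on $L(E\wedge F_0 S^p)$, together with the appeal to~\cite{Schwede_homotopy-groups} for preservation of semistability under smashing with the flat spectrum $F_0 S^p$. Once this is in place, the remainder is routine bookkeeping of naive homotopy groups and the $\Omega$-spectrum property.
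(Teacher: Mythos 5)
Your proof is correct and follows essentially the same route as the paper: identify the spectrum-degree-$p$ part of $\Omega^{\cI}_{\mathrm{Sp}}(E)$ with the degree-$0$ part of $\Omega^{\cI}_{\mathrm{Sp}}(S^p\wedge E)$, invoke \cite{Schwede_homotopy-groups} to see that $S^p\wedge E$ is semistable, and settle degree $0$ by mapping to an $\Omega$-spectrum replacement, which yields an $\cN$-equivalence to a homotopy constant $\cI$-space. The only cosmetic difference is that the paper cites the elementary fact that smashing with a sphere preserves semistability (\cite[4.6~Example]{Schwede_homotopy-groups}) directly, rather than routing through flatness of $F_0S^p$ and the smash-product theorem --- for which you would also want to note that $F_0S^p$ is itself semistable, since flatness alone does not guarantee that smashing preserves semistability.
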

\begin{proof}
Let $E \to F$ be a $\pi_*$-isomorphism to a symmetric $\Omega$-spectrum $F$. Then in spectrum level $0$, the induced map $\Omega^{\cI}_{\mathrm{Sp}}(E) \to \Omega^{\cI}_{\mathrm{Sp}}(F)$ is  an $\cN$-equivalence to a homotopy constant $\cI$-space. The $\cI$-space in spectrum level $k > 0$ of $\Omega^{\cI}_{\mathrm{Sp}}(E)$ is isomorphic to the $\cI$-space in spectrum level $0$ of the $\cI$-symmetric spectrum $\Omega^{\cI}_{\mathrm{Sp}}(S^k \sm E)$ associated with the symmetric spectrum $S^k \sm E$. Since $S^k \sm E$ is semistable if $E$ is~\cite[4.6~Example]{Schwede_homotopy-groups}, the level $0$ case implies the general case. 
\end{proof}

\begin{proof}[Proof of Proposition~\ref{prop:argument-replacement}]
The compatibility with the simplicial structure maps follows from Lemma~\ref{lem:comp-after-colim}. It is clear that the first and the last map are stable equivalences. The third map is a stable equivalence because $C([k],-)$ is cofibrant in $(\Spsym{})^{\cI}$. Using once more that $\hocolim_{\cI}C([k],-) \to \colim_{\cI}C([k],-)$ is a stable equivalence reduces the claim about the second map to showing that $\hocolim_{\cI^{\times k+1}}C^{\mathrm{tw}}([k];-) \to \hocolim_{\cI}C([k],-)$ is a stable equivalence. In view of Lemma~\ref{lem:mul-hty-cofinal}, it is sufficient to show that $C([k],-)$ is semistable as an $\cI$-space in every spectrum degree. Since the cofibrant replacement~\eqref{eq:cof-rep} is an objectwise level acyclic fibration of symmetric spectra, this follows from  Lemma~\ref{lem:OmegaISp-semistable} since our assumptions on $R$ and $M$ imply that   $B^{\mathrm{cy}}_{k}(R;M)$ is semistable; see~\cite[4.10~Theorem]{Schwede_homotopy-groups}. 
\end{proof}


\begin{bibdiv}
\begin{biblist}

\bib{Angeltveit-et-al_relative-cyclotomic}{misc}{
      author={Angeltveit, V.},
      author={Blumberg, A.},
      author={Gerhardt, T.},
      author={Hill, M.},
      author={Lawson, T.},
      author={Mandell, M.},
       title={Relative cyclotomic spectra and topological cyclic homology via
  the norm},
        date={2014},
        note={\arxivlink{1401.5001}},
}

\bib{Boekstedt_THH}{unpublished}{
      author={B{\"o}kstedt, M.},
       title={Topological {H}ochschild homology},
        date={1985},
        note={Preprint, Bielefeld},
}

\bib{Dundas_GMc_local}{book}{
      author={Dundas, Bj{\o}rn~Ian},
      author={Goodwillie, Thomas~G.},
      author={McCarthy, Randy},
       title={The local structure of algebraic {K}-theory},
      series={Algebra and Applications},
   publisher={Springer-Verlag London Ltd.},
     address={London},
        date={2013},
      volume={18},
        ISBN={978-1-4471-4392-5; 978-1-4471-4393-2},
}

\bib{Dugger_replacing}{article}{
      author={Dugger, Daniel},
       title={Replacing model categories with simplicial ones},
        date={2001},
        ISSN={0002-9947},
     journal={Trans. Amer. Math. Soc.},
      volume={353},
      number={12},
       pages={5003\ndash 5027 (electronic)},
         url={http://dx.doi.org/10.1090/S0002-9947-01-02661-7},
}

\bib{HSS}{article}{
      author={Hovey, Mark},
      author={Shipley, Brooke},
      author={Smith, Jeff},
       title={Symmetric spectra},
        date={2000},
        ISSN={0894-0347},
     journal={J. Amer. Math. Soc.},
      volume={13},
      number={1},
       pages={149\ndash 208},
}

\bib{Schwede_homotopy-groups}{article}{
      author={Schwede, Stefan},
       title={On the homotopy groups of symmetric spectra},
        date={2008},
        ISSN={1465-3060},
     journal={Geom. Topol.},
      volume={12},
      number={3},
       pages={1313\ndash 1344},
         url={http://dx.doi.org/10.2140/gt.2008.12.1313},
}

\bib{Schwede_SymSp}{misc}{
      author={Schwede, Stefan},
       title={Symmetric spectra},
        date={2012},
        note={Book project, available at the author's home page},
}

\bib{Shipley_THH}{article}{
      author={Shipley, Brooke},
       title={Symmetric spectra and topological {H}ochschild homology},
        date={2000},
        ISSN={0920-3036},
     journal={$K$-Theory},
      volume={19},
      number={2},
       pages={155\ndash 183},
}

\bib{Shipley_convenient}{incollection}{
      author={Shipley, Brooke},
       title={A convenient model category for commutative ring spectra},
        date={2004},
   booktitle={Homotopy theory: relations with algebraic geometry, group
  cohomology, and algebraic {$K$}-theory},
      series={Contemp. Math.},
      volume={346},
   publisher={Amer. Math. Soc., Providence, RI},
       pages={473\ndash 483},
         url={http://dx.doi.org/10.1090/conm/346/06300},
}

\bib{Sagave-S_group-compl}{article}{
      author={Sagave, Steffen},
      author={Schlichtkrull, Christian},
       title={Group completion and units in \texorpdfstring{$\cI$}{I}-spaces},
        date={2013},
     journal={Algebr. Geom. Topol.},
      volume={13},
       pages={625\ndash 686},
}

\bib{Thomason-hocolim}{article}{
      author={Thomason, R.~W.},
       title={Homotopy colimits in the category of small categories},
        date={1979},
        ISSN={0305-0041},
     journal={Math. Proc. Cambridge Philos. Soc.},
      volume={85},
      number={1},
       pages={91\ndash 109},
         url={http://dx.doi.org/10.1017/S0305004100055535},
}

\end{biblist}
\end{bibdiv}

\end{document}